\newcommand{\CP}{\mathbb{C}P}
\newcommand{\C}{\mathbb{C}}
\newcommand{\R}{\mathbb{R}}
\newcommand{\T}{\mathbb{T}}
\newcommand{\Z}{\mathbb{Z}}
\DeclareMathOperator{\std}{std}
\newcommand{\om}{\omega}
\DeclareMathOperator{\id}{id}
\newtheorem{thm}{Theorem}[section]
\newtheorem{prop}[thm]{Proposition}
\newtheorem{defn}[thm]{Definition}
\newtheorem{lemma}[thm]{Lemma}
\newtheorem{pblm}{Problem}
\title{Almost toric fibrations on symplectic blow ups}
\author{Pranav Chakravarthy and Yoel Groman}
\begin{document}
\maketitle
\begin{abstract}
   Given a symplectic 4-manifold with an almost toric fibration and a symplectic ball embedding whose image under the moment map is contained in an affine convex set $R$, we produce a symplectomorphism between the almost toric blow-up and the symplectic blow-up which is the identity on the pre-image of the complement of $R$. Furthermore, under a compatibility condition of the ball embedding with the boundary divisor, we show that the symplectomorphism can be chosen to preserve the induced symplectic log canonical divisors. 
\end{abstract}
\section{Introduction}
Given a symplectic ball embedding $\iota: B^4(c) \hookrightarrow M$ into a 4-manifold $(M,\om)$, the \emph{symplectic blow up}  $\tilde{M}$ (as described in \cite{MS}) is a local surgery done by removing the interior of the ball embedding and collapsing the boundary sphere along the Hopf fibration. Assume now $M$ is equipped with an \emph{almost toric fibration} $\mu:M \rightarrow B$. Then there is another local surgery construction called the \emph{almost toric blow up} which produces a new manifold $M'$ with a new ATF $\mu':M'\to B'$. It is obtained by removing the pre-image of an appropriate triangle $T(c)\subset B$ and inserting a nodal singularity in its place. A reference is \cite{Symington}. For the convenience of the reader, we review the definitions below. Both operations are surgeries which modify the second cohomology lattice in the same way. 

In this note, we address the question of how these two operations are related. We start by posing the global problem.
\begin{pblm}
    Construct a symplectomorphism $\phi:M'\to\tilde{M}.$
\end{pblm}

When $M$ is a closed rational manifold, such a symplectomorphism is guaranteed to exist by classification results for minimal symplectic 4-manifolds. We recall a particular case of this argument below. The argument employs global methods. In particular, it does not apply to non-rational or non-compact almost toric manifolds. Note that a symplectic sphere in any symplectic $4$-manifold has a normal neighborhood in which it is the part of the boundary of a toric variety. Thus the question of relating the two surgeries is relevant beyond rational manifolds and beyond those which admit a global ATF. For a classification of the latter see  \cite{Leung-Symington}. 

This leads us to pose a more local version of the problem.

\begin{pblm}
    Construct a symplectomorphism $\phi:M'\to\tilde{M}$ which is the identity outside of an appropriately small open set containing $T(c)$ and $\iota(B^4(c))$.
\end{pblm}

Let us refine this question a bit. Observe that symplectic blow up differs from complex blow up in that it does not give rise to a proper transform on divisors. On the other hand, almost toric blow up is somewhat akin to complex blow up in that an ATF has an associated boundary divisor $\mu^{-1}(\partial B)$. The almost toric blow up gives rise to the proper transform - the boundary divisor of $\mu':M'\to B$. Thus, almost toric blow up is properly thought of as an operation on \emph{symplectic log Calabi-Yau pairs.}

Correspondingly for a symplectic log Calabi-Yau pair $(X,D)$ we consider a compatibility condition between a ball embedding and $D$ so that the symplectic blow up gives rise to a new log Calabi-Yau pair $(\tilde{M},\tilde{D}).$ See Definition~\ref{Defn:Compatibleball}.

We can now state our main Theorem. 

\begin{thm}\label{tmMain}
    Let $R\subset B$ be an affine convex subset with an edge $e$ contained in $\partial_i B$, the $i$th edge of the boundary. Let $T(c)\subset R$ have $e$ as one of its edges. Let $R^o$ be the relative interior of $R$ and let $\iota:B^4(c)\to\mu^{-1}(R^o)$ be a symplectic ball embedding contained in  $\mu^{-1}(R^o)$. Then there is a symplectomorphism $\phi:M'\to\tilde{M}$ which is the identity outside of $\mu^{-1}(R)$. Furthermore, if we assume that the ball embedding is compatible with the divisor $D_i$ then $\phi$ can be chosen to be a symplectomorphism of log Calabi-Yau pairs. 
\end{thm}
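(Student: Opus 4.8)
The plan is to make everything local inside $\mu^{-1}(R)$, where both surgeries take place, and there to recognize each of them as a symplectic blow-up along one and the same symplectic ball, up to an isotopy of ball embeddings. The symplectic blow-up modifies $M$ only inside $\iota(B^4(c))\subset\mu^{-1}(R^o)$, while the almost toric blow-up modifies $M$ only inside $\mu^{-1}(T(c))\subset\mu^{-1}(R)$ — the branch cut emanating from the newly created node is a feature of the base diagram $B'$, not of the symplectic manifold $M'$, so it is harmless that it must leave $R$. Hence $M,M',\tilde M$ are canonically identified over $B\setminus R$, and it suffices to construct $\phi$ over $\mu^{-1}(R)$, equal to the identity near $\mu^{-1}(\partial R\setminus e)$, and extend it by the identity. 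Since $R$ is an affine convex polygon with one edge $e$ on $\partial B$, after removing from $R$ the branch cuts of any nodes of $\mu$ in its interior (the remaining argument being the same), the restricted fibration has trivial affine monodromy, so by the normal form for toric fibrations (a relative form of Delzant's theorem) $\mu^{-1}(R)$ is equivariantly symplectomorphic to $\mu_P^{-1}(R)$ for a Delzant polytope $P\supset R$ agreeing with $B$ near $e$. From now on I may assume we sit inside a toric manifold $M_P$, with $D_i$ a toric divisor.

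The second step produces a preferred ball. In the local toric chart around the relative interior of $e$, in which $D_i$ is a coordinate hyperplane, there is a distinguished equivariant ball embedding $\iota_{\mathrm{std}}:B^4(c)\hookrightarrow\mu^{-1}(R^o)$ meeting $D_i$ in the model way prescribed by Definition~\ref{Defn:Compatibleball}. I would then establish — recalling the construction of \cite{Symington} and, if needed, verifying it directly from the explicit ``corner-to-node'' local model — that the almost toric blow-up along $T(c)$ coincides, as a symplectic manifold, with the symplectic blow-up of $M$ along $\iota_{\mathrm{std}}$ in the sense of \cite{MS}, with the new almost toric fibration $\mu'$ agreeing with $\mu$ over $B\setminus R$, and that under this identification the proper transform of $D_i$ is precisely the boundary divisor $(\mu')^{-1}(\partial_i B')$. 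In particular the log Calabi--Yau pair attached to $\mu'$ is the one obtained by blowing up $(M_P,D)$ along the compatible ball $\iota_{\mathrm{std}}$. Matching the symplectic forms on the region glued in — a neighbourhood of the nodal fibre versus a neighbourhood of the exceptional sphere — is a Moser argument, performed relative to $D_i$ in the compatible case; it is routine but is the only genuine computation here.

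It remains to connect $\iota$ to $\iota_{\mathrm{std}}$. I would show that $\iota$ and $\iota_{\mathrm{std}}$ are isotopic through symplectic ball embeddings of $B^4(c)$ into $\mu^{-1}(R^o)$ by an isotopy induced by a compactly supported Hamiltonian isotopy of $\mu^{-1}(R^o)$. This would follow from the connectedness of the space of symplectic ball embeddings of $B^4(c)$ into $\mu^{-1}(R^o)$, which I would deduce from McDuff's uniqueness theorem for symplectic ball embeddings in dimension four, applied in a closed symplectic $4$-manifold into which $\mu^{-1}(R)$ embeds, using inflation and positivity of intersections to move the ball and a cut-off to keep the resulting ambient isotopy supported near $\iota(B^4(c))\cup\iota_{\mathrm{std}}(B^4(c))$, hence away from the inner boundary. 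The isotopy then induces a symplectomorphism from $\tilde M$ (the blow-up along $\iota$) to $M'$ (the blow-up along $\iota_{\mathrm{std}}$, by the previous step) that is the identity near $\mu^{-1}(\partial R\setminus e)$; extending by the identity over $M\setminus\mu^{-1}(R)$ produces $\phi$. For the log Calabi--Yau refinement, assuming $\iota$ is compatible with $D_i$, I would upgrade this to a relative statement — that $\iota$ and $\iota_{\mathrm{std}}$ are isotopic through ball embeddings compatible with $D_i$ — by running the same argument for the pair $(M_P,D_i)$, keeping the auxiliary pseudoholomorphic curves in good position with respect to the symplectic surface $D_i$; the resulting $\phi$ is then a symplectomorphism of log Calabi--Yau pairs.

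I expect the hard part to be this last step: transplanting McDuff's connectedness theorem for symplectic ball embeddings to the open region $\mu^{-1}(R^o)$ with enough control on the support of the ambient isotopy to keep $\phi$ the identity near $\mu^{-1}(\partial R\setminus e)$, and — above all — proving its relative form for ball embeddings compatible with $D_i$, where one must keep a symplectic surface under control throughout the pseudoholomorphic-curve arguments. By comparison, the localization and toric-reduction step is bookkeeping, and the identification of the almost toric blow-up with a blow-up at $\iota_{\mathrm{std}}$ is a local construction whose only analytic input is a (relative) Moser argument on the glued-in piece.
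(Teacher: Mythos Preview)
Your strategy is genuinely different from the paper's. You reduce to showing that two specific ball embeddings $\iota$ and $\iota_{\mathrm{std}}$ are Hamiltonian isotopic inside $\mu^{-1}(R^o)$, after first identifying the almost toric blow-up with the symplectic blow-up at $\iota_{\mathrm{std}}$. The paper never isotopes balls. Instead, it compactifies $\mu^{-1}(P)$ for a Delzant polygon $P\subset R$ to a closed toric surface $X_P$ by symplectic reduction along the new edges, and then invokes the Symplectic Torelli Theorem for log Calabi--Yau pairs (Li--Min--Ning): since the almost toric and symplectic blow-ups of $X_P$ have matching cohomological data, they are abstractly symplectomorphic as log CY pairs. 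The remaining work --- and this is the bulk of the paper's argument --- is a four-step adjustment (their Proposition~\ref{lmBULocalCase}) forcing that abstract symplectomorphism to be the identity near the artificial boundary divisors $E=\sum_{j\neq i}D_j$, so that it glues back into $M$. Your approach is more geometric and would give a more explicit $\phi$; theirs trades the ball-isotopy problem for a classification black box plus a hands-on normal-form argument near $E$.

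The gap in your outline is Step~3, and it is more serious than your closing paragraph suggests. McDuff's connectedness theorem, applied in any closed compactification $X_P$, gives an ambient Hamiltonian isotopy carrying $\iota$ to $\iota_{\mathrm{std}}$, but gives no control whatsoever on its support: the isotopy may well move the artificial boundary divisors $E$, and there is no mechanism to ``cut it off'' near the two balls while preserving its endpoint behaviour --- a Hamiltonian isotopy of balls is not local in that sense. What you would actually need is connectedness of symplectic ball embeddings into the \emph{open} manifold $X_P\setminus E$ (equivalently, into $\mu^{-1}(R^o)$), with the connecting isotopy compactly supported there; and in the log CY case, the same statement relative to the surface $D_i$. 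Neither follows from McDuff's theorem, and proving them would require rerunning the $J$-holomorphic curve arguments while keeping a configuration of symplectic spheres fixed --- essentially an independent theorem. The paper's route via Torelli plus boundary adjustment is precisely designed to avoid this: it accepts an uncontrolled global symplectomorphism and then repairs it near $E$ by explicit local moves (Alexander trick at the corners, linearization of the normal derivative along each $D_j$, and a Moser-type lemma), which is delicate but elementary.
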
   

\section*{Acknowledgements}
YG was supported by ISF Grant No. 3605/24. 
PC is thankful to Universit\'e Libre de Bruxelles for support during the completion of this project. This work was supported by the FWO and the FNRS via EOS project 40007524. PC is grateful to Jie Min and Margaret Symington for discussions and comments on the document.  
\section{Symplectic and Almost-Toric Blow-up}

\subsection{Definitions}
\subsubsection{Symplectic blow-up}
Given a symplectic ball embedding $\iota: B^4(c) \hookrightarrow M$ of a closed ball of capacity $c$ into a 4-manifold $(M,\om)$, the \emph{symplectic blow-up} $\tilde{M}$ is constructed as follows. We remove the interior of the embedded ball $\iota(\mathrm{Int}(B^4(c)))$ and consider the boundary 3-sphere $\iota(\partial B^4(c)) \cong S^3$. The symplectic blow-up is obtained by collapsing this boundary sphere along the Hopf fibration $S^3 \to S^2 \cong \CP^1$, where each fiber $S^1$ of the Hopf fibration is collapsed to a point. This procedure creates an exceptional sphere $E \cong \CP^1$ in the blown-up manifold $\tilde{M}$. The symplectic area of this exceptional sphere is $\pi c^2$, where $c$ is the capacity of the original ball embedding.
\subsubsection{Almost toric blow up}
\begin{defn}[Almost toric blow-up]\label{Defn:ATFblowup}(See Section 5.4 in \cite{Symington}. See also Section 9.1 in \cite{Evans_2023} for more details)
Let $(M,\om)$ be a symplectic 4-manifold with an almost toric fibration $\mu: M \rightarrow \R^2$ such that the 1-stratum of
the image $B:=\mu(M)$ is non-empty. Then the almost toric blow up/nodal blow up gives us a symplectic manifold $({M'}, {\om}')$ with an almost toric fibration whose base diagram is constructed as follows:
\begin{enumerate}
    \item Choose a base diagram
for $B$ such that for some $c, \epsilon > 0 $ the set $\{(p_1, p_2)~|~ p_1 > -\epsilon, p_2 \geq 0, p_1 + p_2 < c + \epsilon\}$ with
the boundary components marked by heavy lines represent a fibered neighborhood of a
point in the 1-stratum. Remove the triangle with vertices $(0, 0), (c, 0), (0, c)$. 
 \item Connect the two resulting vertices of the base with a pair of dotted lines (As in fig~\ref{fig:ATFblowup})
\end{enumerate}
We say the almost toric blow up \textit{is of size} $c$ and denote the corresponding triangle by $T(c)$. Note that the symplectic area of the exceptional divisor under this surgery is $\pi c^2$.

\end{defn}

{
 \tikzset{every picture/.style={line width=0.75pt}} %set default line width to 0.75pt 
 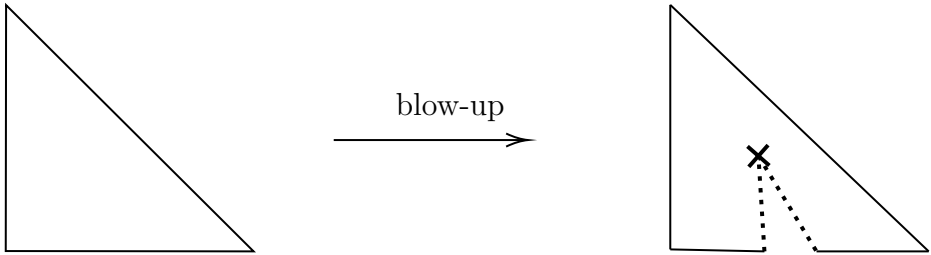
\begin{figure}[H]
    \centering

 \begin{tikzpicture}
 [x=0.75pt,y=0.75pt,yscale=-1,xscale=1] %uncomment if require: \path (0,363); %set diagram left start at 0, and has height of 363 %
 Straight Lines [id:da2462932980075917] \draw (252.5,86) -- (347.5,86) ; \draw [shift={(349.5,86)}, rotate = 180] [color={rgb, 255:red, 0; green, 0; blue, 0 } ][line width=0.75] (10.93,-3.29) .. controls (6.95,-1.4) and (3.31,-0.3) .. (0,0) .. controls (3.31,0.3) and (6.95,1.4) .. (10.93,3.29) ; 
 Straight Lines [id:da6409028868011538] \draw (420.5,141) -- (467.5,142) ; 
 Straight Lines [id:da5723668915133708] \draw [line width=1.5] [dash pattern={on 1.69pt off 2.76pt}] (467.5,142) -- (464.5,94) ; \draw [shift={(464.5,94)}, rotate = 311.42] [color={rgb, 255:red, 0; green, 0; blue, 0 } ][line width=1.5] (-7.27,0) -- (7.27,0)(0,7.27) -- (0,-7.27) ; 
 Straight Lines [id:da24002424902975872] \draw [line width=1.5] [dash pattern={on 1.69pt off 2.76pt}] (464.5,94) -- (493.5,142) ; 
 Straight Lines [id:da3880169037789113] \draw (493.5,142) -- (549.5,142) ; 
 Straight Lines [id:da8878081741076976] \draw (420.5,18) -- (420.5,141) ; 
 Shape: Right Triangle [id:dp9291770470651581] \draw (88.96,18.12) -- (212.5,142) -- (88.79,141.83) -- cycle ; 
 Straight Lines [id:da5717032473053875] \draw (420.5,18) -- (549.5,142) ; 
 Text Node \draw (282,61) node [anchor=north west][inner sep=0.75pt] [align=left] {blow-up};
\end{tikzpicture}
\caption{Almost toric blow-up }
\label{fig:ATFblowup}
 \end{figure}
 }

\subsection{Blow-ups of symplectic divisors}

We want to consider the operations of symplectic blowing up and down as operations on a pair $(M,V)$ of a symplectic manifold and a symplectic divisor $V$. Note that in the symplectic category, the notion of proper transform of a divisor doesn't apriori make sense. Similarly, there is no map from the blow-up to the blow down, and therefore no pushforward of divisors. 

\begin{defn}\label{Defn:Compatibleball}
    We say a divisor $V$ in a symplectic 4-manifold $M$ is \emph{compatible} with a ball embedding $\iota:B(r)\to M$ if for some $\epsilon>0$ the embedding extends to $B(r+\epsilon)$, and $V$ intersects $\iota(B(r+\epsilon))$ as the image of the $z_1$-plane in $\C^2$. We refer to $\iota$ as a \emph{relative ball embedding}. We also say the ball embedding $\iota$ is \emph{$V$-compatible}. 
    \end{defn}
    If $V$ is compatible with the ball embedding $\iota$, we define the symplectic proper transform $\widetilde{V}$ of $V$ in the standard way by looking at the closure of the preimage under the blow down map. We define an analogous notion to allow blow-down of divisors. 
    \begin{defn}
    We say $V$ is compatible with a sphere $S$ of self intersection $-1$ if there is a normal neighborhood of $\mathcal{N}(S)$ of $S$ and a symplectomorphism $\iota$ of a torus invariant neighborhood $U\subset \mathcal{O}(-1)$ of the zero section of the anti-canonical bundle onto $\mathcal{N}(S)$ sending the zero section to $S$ and the fiber over $0$ to the intersection $\iota(U)\cap V$.  
\end{defn}
The blow-down is defined in the standard way by removing the exceptional sphere and glueing in the standard ball in a $V$-compatible way.

\subsection{Symplectic log Calabi Yau pairs}
Recall that a divisor $D$ is said to be a \emph{symplectic log Calabi-Yau divisor} in a symplectic manifold $(X,\om)$, if 
each component of $D$ is a symplectic submanifold that intersects positively transversely with the other components and the homology class of $D$ is Poincar\'e dual to the first Chern class  $c_1(TX,\om)$ of $(X,\om)$.

Let us consider $2$-dimensional symplectic log Calabi-Yau pairs $(X,D)$ for $D=D_1+\dots +D_n$ a log Calabi-Yau divisor. We require that the components of $D$ are $\omega$-orthogonal. 
An \emph{equivalence} of such pairs is a symplectomorphism $\psi:(X,D)\to (X',D')$ so that $\psi_*([D_i])=[D'_i]$. In particular, both divisors have the same number of components. 

We will also use the notion of  a \emph{log CY triple} $(X,D,\mu)$ where $\mu:X\to B$ is an ATF with $D=\mu^{-1}(\partial B).$ 

Before proceeding, we observe that for almost toric blow-up and blow down there are canonical notions of proper transform and blow down for the boundary divisors.

\subsection{Symplectic reduction along boundary components}\label{subsection:sympreduction}
Given a Delzant polygon $\Delta \subset \R^2$, we recall the construction of a symplectic toric manifold whose moment map image is the polygon $\Delta$. Consider the manifold with corners $\Delta \times \T^2$, we compactify this manifold along the boundary by collapsing the orbits corresponding to circle subgroup of $\T^2$ generated by the primitive integral orthogonal vector to the annihilator of the edge. Given a 4-manifold $M$ with an ATF $\mu: M \rightarrow B$ and a convex Delzant polygon $\Delta$, perhaps containing nodes in the interior, and whose boundary may partially overlap with the boundary of the ambient ATF, we can compactify along edges that don't overlap the boundary of the original ATF by collapsing $\Delta \times \T^2$ along circles generated by the primitive integral orthogonal vector to the annihilator of the edge as above. This gives us a closed 4-manifold, with an ATF whose base is the same as $\Delta$ and whose interior is naturally identified with the preimage of the interior of $\Delta$ in the original ATF $\mu$. The above construction plays a crucial role in reducing the proof of Theorem~\ref{tmMain}, to the proof for rational manifolds.

\section{Global isomorphism for blow up toric varieties}
As a first step in proving the main theorem, we prove a special case. Let $X$ be a toric 4-manifold with toric boundary $D$. Let $D_i$ be a component of $D$. We will consider the symplectomorphism between the almost toric blow up and the symplectic blow up, first in the case when the ball embedding is $D_i$ compatible, and then in the case when we only assume the ball embedding does not meet any component of $D$ other than possibly $D_i$.

\subsection{The $D$-compatible case}

 Let $(X',D')$ be the almost toric blow up of capacity $c$ of $D_i$, and let $(\tilde{X},\tilde{D})$ be the symplectic blow-up of a ball of capacity $c$ along a compatible ball embedding $\iota:B^4(c) \hookrightarrow (X,\om)$ that intersects only the divisor $\tilde{D}_i$.  In this section, we present the global isomorphism between the log Calabi-Yau pairs $(X',D')$ and $(\tilde{X},\tilde{D})$. 
 
\begin{prop}\label{globalCase}
There is a symplectomorphism of log CY pairs $(X',D')\to (\tilde{X},\tilde{D})$.
\end{prop}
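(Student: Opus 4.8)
\textbf{Proof plan for Proposition~\ref{globalCase}.}

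The plan is to exploit the fact that in the toric setting both $(X',D')$ and $(\tilde X,\tilde D)$ are themselves symplectic toric manifolds, so that the problem reduces to an identification of Delzant polygons together with a compatibility check on the boundary divisors. First I would observe that, since the ball embedding $\iota$ is $D_i$-compatible, the symplectic blow-up $(\tilde X,\tilde D)$ carries a toric structure: using the local model for the $D_i$-compatible ball (the standard ball sitting equivariantly in $\mathcal{O}(-1)$ with fiber over $0$ mapped into $D_i$), the blow-up can be performed torus-equivariantly, so $\tilde X$ is the toric manifold whose moment polytope is obtained from $\Delta_X$ by the usual toric corner chop at the vertex of $\Delta_X$ lying on the edge $e_i$ dual to $D_i$, cutting a triangle of the appropriate size $c$. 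The exceptional sphere $E$ is then the preimage of the new short edge, and $\tilde D_i$ is the proper transform, i.e. the preimage of the truncated edge $e_i$. Here I would need the standard fact (Karshon, McDuff) that a $D_i$-compatible/torus-equivariant ball embedding into a toric $4$-manifold is, up to equivariant symplectomorphism, the standard corner model — or rather, since we are free to choose any ball of capacity $c$ meeting only $\tilde D_i$, that uniqueness of ball embeddings lets us assume $\iota$ is this standard one.

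Next I would identify the base diagram of the almost toric blow-up $(X',D')$. By Definition~\ref{Defn:ATFblowup}, the almost toric blow-up of size $c$ along the edge $e_i$ removes the triangle $T(c)$ with a vertex on $e_i$ and reglues with a nodal slide; but the key point is that one may then perform a \emph{nodal trade} followed by (or rather, the inverse operation exhibiting that) the resulting ATF base is affine-integrally equivalent to the \emph{corner-chopped Delzant polygon} $\Delta_{\tilde X}$ from the previous paragraph. Concretely: sliding the node of $X'$ out to the boundary and trading it for a genuine toric corner turns the almost toric base diagram of $X'$ into the honest Delzant polygon obtained by chopping the corner of $\Delta_X$ by a triangle of size $c$ — which is exactly $\Delta_{\tilde X}$. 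Since a $4$-dimensional symplectic toric manifold is determined up to equivariant symplectomorphism by its Delzant polytope (Delzant's theorem), this yields a symplectomorphism $X'\to\tilde X$. Finally, one checks that under this identification the boundary divisor $D'=\mu'^{-1}(\partial B')$ — which by the ATF construction already comes with its proper transform — is sent to $\tilde D$ component by component: the proper transform $D_i'$ of the $i$th boundary face maps to $\tilde D_i$, the exceptional boundary face maps to $E$, and every other face is left untouched, so $\psi_*([D_j'])=[\tilde D_j]$ for all $j$. This gives the equivalence of log CY pairs.

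The main obstacle I expect is the middle step: making precise the claim that the \emph{almost toric} base diagram of $X'$, after a nodal trade, becomes \emph{affine-integrally equivalent} to the Delzant polygon of the torus-equivariant symplectic blow-up $\tilde X$, and that this affine equivalence is realized by an actual symplectomorphism compatible with the divisors rather than merely a combinatorial coincidence of polytopes. In particular one has to be careful that the nodal trade (Symington's operation) is a symplectomorphism in a neighborhood of the modified region and the identity outside, and that after the trade one genuinely lands in the Delzant category so that Delzant's uniqueness theorem applies. A secondary subtlety is matching the sizes: one must verify that the triangle $T(c)$ removed in the almost toric blow-up and the corner triangle cut in the toric blow-up both correspond to an exceptional sphere of symplectic area $\pi c^2$, which both definitions in the excerpt record, so this is a bookkeeping check rather than a real difficulty. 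Everything else — equivariance of the standard ball model, uniqueness of ball embeddings of a fixed capacity meeting a prescribed divisor component, and the Delzant correspondence — is standard and can be cited.
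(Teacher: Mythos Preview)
Your plan has a genuine gap: neither $(\tilde X,\tilde D)$ nor $(X',D')$ is a toric pair, so Delzant's theorem cannot be used to match them as log CY pairs. For $(\tilde X,\tilde D)$, the $D_i$-compatible ball of Definition~\ref{Defn:Compatibleball} is centred at a smooth point of $D_i$, not at a torus fixed point; the blow-up there is not $\T^2$-equivariant, and the resulting $\tilde D$ is not the toric boundary of any Delzant polygon. For $(X',D')$, the almost toric blow-up leaves the number of boundary components unchanged at $n$ while dropping $\sum_j (D'_j)^2$ by one relative to $\sum_j D_j^2=12-3n$; since every Delzant $n$-gon has boundary self-intersections summing to $12-3n$, the cycle $D'$ cannot be the toric boundary of any toric surface. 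Your nodal-trade step does produce a symplectomorphism from $X'$ to the toric corner chop of $X$, but that symplectomorphism carries $D'$ to a divisor with $n$ components, whereas the toric boundary of the corner-chopped polytope has $n+1$; so the Delzant identification you obtain does not respect the log CY divisors, and you are left with exactly the problem you started with.

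The paper's proof avoids toric models entirely and instead invokes the Symplectic Torelli Theorem of Li--Min--Ning (Theorem~\ref{Theorem:Isotopyofdivisors}): one writes down the obvious integral isometry $H^2(X';\Z)\cong H^2(X;\Z)\oplus\Z[E']\to H^2(X;\Z)\oplus\Z[\tilde E]\cong H^2(\tilde X;\Z)$, checks that it sends each $PD[D'_j]$ to $PD[\tilde D_j]$ and $[\omega']$ to $[\tilde\omega]$, observes that both divisors are $\omega$-orthogonal (inherited from the toric $D$), and concludes. Your approach could be salvaged only by supplementing it with exactly such a Torelli-type argument for the divisors, at which point the Delzant detour becomes superfluous.
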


We will need the following theorem. 

\begin{thm}(Symplectic Torelli Theorem, {Theorem 13 in \cite{Li-Min-Ning}})\label{Theorem:Isotopyofdivisors}
 Let $(M_1,\om_1)$ and $(M_2,\om_2)$ be a pair of rational symplectic 4-manifolds. Let $D_1,D_2$ be a pair of symplectic log Calabi-Yau divisors in $M_1$, $M_2$ with components $D_{1,i},D_{2,i}$ respectively. Suppose the components of $D_1$ and $D_2$ are $\omega_1$-orthogonal and $\om_2$-orthogonal respectively. If there is an integral isometry $$
\gamma: H^2(M_1 ; \mathbb{Z}) \rightarrow H^2(M_2 ; \mathbb{Z})
$$ which maps $PD([D_{1,i}])$ to $PD([D_{2,i}])$, and whose real extension $\gamma_R : H^2(M_2;\R)\rightarrow H^2(M_1;\R)$ maps $[\om_1]$ to $[\om_2]$, then $(M_2,\om_2,D_2)$ is isomorphic to $(M_1,\om_1,D_1)$. 
    
\end{thm}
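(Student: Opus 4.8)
The plan is to split the theorem into two essentially independent problems: realizing the abstract cohomological isometry $\gamma$ by a genuine symplectomorphism of the underlying manifolds (forgetting the divisors), and then an isotopy-uniqueness statement for log Calabi--Yau divisors inside a single fixed rational symplectic $4$-manifold. The $\om$-orthogonality hypothesis on the components will be used throughout to pin down the germ of the symplectic form near each configuration, so that normal-neighborhood arguments apply.

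For the first problem I would proceed as follows. Since $M_1$ and $M_2$ are rational, their oriented diffeomorphism type is determined by the intersection lattice together with the canonical class; the log Calabi--Yau condition gives $PD(K_{M_j})=-\sum_i[D_{j,i}]$, so the hypothesis that $\gamma$ matches the $PD([D_{j,i}])$ componentwise forces $\gamma$ to send $K_{M_1}$ to $K_{M_2}$. A realization theorem for lattice isometries of rational surfaces (Wall, Friedman--Morgan) then produces an orientation-preserving diffeomorphism $\Phi_0:M_1\to M_2$ inducing $\gamma$. Pulling back, $\Phi_0^*\om_2$ is a symplectic form on $M_1$ cohomologous to $\om_1$ (by the hypothesis on the real extension of $\gamma$) and compatible with the same canonical class. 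Invoking McDuff's uniqueness theorem for symplectic forms on rational $4$-manifolds (any two cohomologous such forms lie in a connected family, hence are related by a Moser isotopy), one obtains a diffeomorphism $g$ isotopic to the identity with $g^*\Phi_0^*\om_2=\om_1$. Then $\Phi:=\Phi_0\circ g$ is a symplectomorphism $(M_1,\om_1)\to(M_2,\om_2)$ still inducing $\gamma$.

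This reduces the theorem to the following: inside a fixed rational $(M,\om)$, two $\om$-orthogonal log CY divisors $D$ and $D'$ with $[D_i]=[D'_i]$ for each labeled component are carried into one another by an ambient symplectomorphism isotopic to the identity. For this I would use pseudoholomorphic curve theory in complex dimension one. Pick $\om$-tame almost complex structures $J$ and $J'$ making $D$ and $D'$ holomorphic respectively, which is possible because $\om$-orthogonal symplectic configurations can be simultaneously straightened near their intersections. Each component is an embedded symplectic sphere or torus, and by the adjunction formula together with automatic transversality in dimension four, the moduli space of embedded holomorphic representatives in each class $[D_i]$ is smooth of the expected dimension and, crucially, nonempty and connected; positivity of intersections then forces any such representatives to meet in exactly the prescribed cyclic pattern. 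Deforming along a path $J_t$ from $J$ to $J'$ and applying Gromov compactness together with the connectedness of the moduli of anticanonical configurations, one interpolates between $D$ and $D'$ by a smooth family of symplectic log CY configurations. A parametrized Moser argument in the orthogonally framed normal neighborhoods then integrates this family of configurations to the desired ambient isotopy.

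The main obstacle I expect is precisely the global, simultaneous control in the last step: one must isotope the entire cycle of components at once while, first, preserving the labeling matched by $\gamma$, so that $D_i$ is sent to $D'_i$ and not merely $D$ to $D'$, and second, maintaining the positive transverse intersection pattern, ruling out bubbling or breaking of the cycle along the path $J_t$. The hypothesis of $\om$-orthogonality is what lets the normal-neighborhood data vary continuously so that the Moser step globalizes, but verifying that the relevant moduli spaces of configurations stay nonempty and connected — so that no component degenerates and the cyclic combinatorics is preserved — is the genuinely hard analytic input, and it is where the rationality of $M$, giving enough positivity and the needed Gromov--Witten nonvanishing, is essential.
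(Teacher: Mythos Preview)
The paper does not prove this theorem; it is quoted as Theorem~13 of \cite{Li-Min-Ning} and used as a black box in the proofs of Propositions~\ref{globalCase} and~\ref{Prop:NonCompatibleCase}. There is therefore no proof in the paper against which to compare your proposal.

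That said, your two-step decomposition---first realize $\gamma$ by a symplectomorphism of the underlying manifolds, then isotope the two divisor configurations inside a fixed $(M,\om)$---is indeed the architecture of the argument in the cited source and its predecessors (Li--Mak, Li--Liu, McDuff). A couple of technical points where your sketch oversimplifies: in Step~1 the uniqueness input is the Li--Liu classification of symplectic structures on rational and ruled manifolds rather than a statement of McDuff alone, and one must check that the realizing diffeomorphism can be chosen to induce exactly $\gamma$ (not merely some isometry), which uses the Friedman--Morgan/Wall description of the image of $\Diff$ in the orthogonal group of the lattice. In Step~2, automatic transversality fails for sphere components with self-intersection $\le -2$, and such components certainly appear in log Calabi--Yau cycles, so the componentwise moduli argument you outline does not go through directly; the actual proofs pass to a minimal model and run an induction on the blow-up structure, tracking the configuration carefully through each blow-down. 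Your closing paragraph correctly isolates the genuine analytic difficulty---preventing degeneration of the configuration along the path $J_t$ while preserving the labeling---and that is precisely the content that the cited paper supplies.
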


\begin{proof}[Proof of Theorem \ref{globalCase}]
Let $D'_j$ and $\tilde{D}_j$ denote the components of $D'$ and $\tilde{D}$ respectively. Let $[E']$ and $[\tilde{E}]$ represent the fundamental classes of the exceptional divisors in the blow up.
There exists a map \begin{align*}
    g: H^2(X',\Z)\cong H^2(X,\Z)\oplus \Z[E']&\longrightarrow H^2(X,\Z)\oplus \Z[\tilde{E}] \cong H^2(\tilde{X},\Z) \\
    a+t[E'] &\to a+t[\tilde{E}]
\end{align*}  
 As the ball embedding $\iota$ intersects only one component $\tilde{D}_i$, we have that $g([D'_{j}])= [\tilde{D}_{j}]$ for all $j \neq i$ and $[E']=[\tilde{E}]$, and hence $g([D'_i-E']) = [\tilde{D}_i - \tilde{E}]$. Therefore, the induced map preserves the intersection form and furthermore, satisfies $g([\om']) = [\tilde{\om}]$. Thus, the map satisfies all the properties. The components of the divisors $D'$ and $\tilde{D}$ are orthogonal. Indeed, the original toric divisor $D$ satisfies this. But both the almost toric and the $D_i$ compatible symplectic blow-ups preserve orthogonality. The claim now follows from Theorem~\ref{Theorem:Isotopyofdivisors}.
\end{proof}

\subsection{The non-$D$-compatible case}
We now deal with the case when the ball embedding is not compatible with the divisor.  That is, as before, let $(X,D)$ be a closed toric surface with boundary divisor $D$ and let $(X',\omega',D')$ be the almost toric blow up of a component $D_i$ with triangle of capacity $c$. Let $(\tilde{X},\tilde{\omega})$ be the symplectic blow up with respect to a symplectic ball embedding $\iota: B^4(c) \hookrightarrow (X,\om)$ with capacity $c$  with the only requirement being that the ball does not meet any component of $D$ other than possibly $D_i$.  
\begin{prop}\label{Prop:NonCompatibleCase}
    There exists a symplectomorphism 
    $$\phi:\left(X',\sum_{j\neq i}D_j\right) \to \left(\tilde{X},\sum_{j\neq i}D_j\right)$$ such that $\phi(D_j) = D_j$ for $j \neq i$.
\end{prop}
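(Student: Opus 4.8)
The plan is to reduce the non-$D$-compatible case to the $D$-compatible case of Proposition~\ref{globalCase} by first normalizing the ball embedding relative to $D_i$. The key point is that the ball $\iota(B^4(c))$ is required only to avoid $D_j$ for $j\neq i$, so we have complete freedom to move it around in the open set $X\setminus\bigcup_{j\neq i}D_j$. First I would invoke the uniqueness of symplectic ball embeddings: by McDuff's theorem, any two symplectic embeddings of $B^4(c)$ into a connected symplectic $4$-manifold with the same image-complement connectivity are isotopic (and in the relevant dimension the space of such embeddings is connected). More precisely, I would produce an ambient symplectic isotopy $\psi_t$ of $X$, supported away from $\bigcup_{j\neq i}D_j$, carrying $\iota$ to a new ball embedding $\iota'$ that is $D_i$-compatible in the sense of Definition~\ref{Defn:Compatibleball} — i.e. meets $D_i$ as the $z_1$-plane. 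To find such an $\iota'$ one can work in a Darboux-type neighborhood of a point of $D_i$ modeled on a toric chart, where a standard $D_i$-compatible ball of capacity $c$ visibly exists; then connectedness of the space of ball embeddings into $X\setminus\bigcup_{j\neq i}D_j$ (using that $D_i$ is connected and that removing the other divisors leaves the manifold connected) gives the isotopy.

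Once $\iota$ has been replaced by the $D_i$-compatible embedding $\iota'$, the symplectic blow-up along $\iota'$ is, by definition, a blow-up of a $D_i$-compatible ball, so it carries a proper transform $\widetilde{D}_i'$ and the pair $(\tilde X, \widetilde{D}_i' + \sum_{j\neq i}D_j)$ is a symplectic log Calabi-Yau pair. Proposition~\ref{globalCase} then supplies a symplectomorphism of log CY pairs from $(X',D')$ to this blown-up pair; forgetting the $i$-th component of each divisor gives exactly a symplectomorphism $\phi:(X',\sum_{j\neq i}D_j)\to(\tilde X,\sum_{j\neq i}D_j)$ with $\phi(D_j)=D_j$ for $j\neq i$. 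Finally, since the symplectic blow-ups along $\iota$ and along $\iota'$ are canonically symplectomorphic (the blow-up construction depends on the embedding only up to isotopy, and the isotopy $\psi_t$ was supported away from $\bigcup_{j\neq i}D_j$, hence fixes those divisors), we may transport $\phi$ back to the blow-up along the original $\iota$ without disturbing the divisors $D_j$, $j\neq i$. Composing these identifications yields the desired $\phi$.

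I expect the main obstacle to be the first step: carefully producing the isotopy $\psi_t$ from $\iota$ to a $D_i$-compatible embedding while keeping it supported in the complement of $\bigcup_{j\neq i}D_j$. This requires a relative version of the connectedness of the space of symplectic ball embeddings — one must check that the ambient isotopy furnished by McDuff's argument can be taken to fix a neighborhood of $D_j$ for $j\neq i$ (straightforward, since those divisors lie outside the relevant region), and, more delicately, that the target $D_i$-compatible model ball and the given ball $\iota(B^4(c))$ genuinely lie in the same connected component of the space of unparametrized embeddings into $X\setminus\bigcup_{j\neq i}D_j$. The latter is where one uses that $X$ is rational and $4$-dimensional, so that Gromov–McDuff techniques apply. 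Everything after the normalization step is formal, given Proposition~\ref{globalCase}.
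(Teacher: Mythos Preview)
Your outline is coherent but takes a genuinely different route from the paper, and the step you flag as the main obstacle is precisely where the two approaches diverge. You want to isotope the \emph{entire} ball $\iota$ to a $D_i$-compatible embedding $\iota'$ inside $X\setminus\bigcup_{j\neq i}D_j$ and then invoke Proposition~\ref{globalCase}. The paper does not attempt this. Instead it moves only the \emph{center} $\iota(0)$ onto $D_i$, by a Hamiltonian supported near a path in the complement of the other divisors, and then builds the missing component $D'_i$ on $\tilde X$ directly: push forward the standard complex structure from the ball, perform the complex blow-up at the (now on-$D_i$) center, take the complex proper transform $\hat D_i$, and carry it over to $\tilde X$ via the McDuff--Salamon comparison of complex and symplectic blow-ups (Theorem~\ref{Thm:Complex&Symplecticequivalence}, packaged as Lemma~\ref{Lemma:CompleteLogCan}). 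This yields an orthogonal log Calabi--Yau divisor $\bigcup_{j\neq i}D_j\cup D'_i$ on $\tilde X$, and the proof concludes with a direct appeal to the Torelli theorem (Theorem~\ref{Theorem:Isotopyofdivisors}), just as in the proof of Proposition~\ref{globalCase}.

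The practical difference is that moving a point is elementary, whereas your key step --- connectedness of capacity-$c$ symplectic ball embeddings into the \emph{open} manifold $X\setminus\bigcup_{j\neq i}D_j$ --- is not a standard citation. McDuff's theorem furnishes an isotopy in the closed rational manifold $X$, but the ball is full-dimensional, so no transversality argument forces that isotopy to avoid the codimension-two divisors $D_j$; establishing the relative statement would itself require a $J$-holomorphic curve argument tailored to the pair $(X,\bigcup_{j\neq i}D_j)$. Your approach would go through if that relative connectivity were available, and it has the virtue of being modular (a clean reduction to the $D$-compatible case). The paper's approach trades that modularity for sidestepping the relative ball-isotopy question entirely, at the cost of re-running the Torelli argument rather than quoting Proposition~\ref{globalCase} as a black box.
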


First we recall the following theorem. 
\begin{thm}\label{Thm:Complex&Symplecticequivalence}
    
    Consider a symplectic ball embedding $\iota: B^4(c) \hookrightarrow M$ with capacity $c$. Push forward the standard complex structure on $\C^2$ to the image of $\iota$ and extend this complex structure to an arbitrary  $\om$-compatible almost complex structure. Let $\hat{M}$ denote the complex blow up centered on $\iota(0)$ with respect to this complex structure. Then there exists a symplectic form $\om_c$ on $\hat{M}$ which assigns area $c$ to the exceptional divisor, and a symplectomorphism $\phi: (\hat{M},\om_c) \rightarrow (\tilde{M},\tilde{\om})$ between the complex blow up and the symplectic blow up  $(\tilde{M},\tilde{\om})$ such that the symplectomorphism is the identity outside a small neighbourhood of $\iota(B^4(c))$.
\end{thm}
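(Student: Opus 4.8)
The plan is to exhibit both $(\hat M,\om_c)$ and $(\tilde M,\tilde\om)$ as the result of removing one and the same small ball from $(M,\om)$ and gluing in one and the same standard model near the exceptional set, and then to patch the two resulting identifications by a Moser argument with control of supports. Throughout I may assume $\iota$ extends to an embedding of $B^4(c+\epsilon)$ for some $\epsilon>0$; this holds in all the situations to which the theorem is applied (the ball then having room to spare inside the region where we work), and the remaining case is recovered in the limit. Using the ball embedding, fix a Darboux chart in which $\om=\om_{\mathrm{std}}$ on $\iota(B^4(c+\epsilon))\subset\C^2$; by the hypothesis on the almost complex structure, the structure equals the standard $i$ there as well, so the complex blow-up $\hat M$ of $M$ at $\iota(0)$ is well defined and agrees with $M$ away from $\iota(B^4(c+\epsilon))$, where it is the ordinary blow-up of a ball in $\C^2$ at the origin: a neighbourhood of the zero section $E\cong\CP^1$ of $\mathcal{O}(-1)\to\CP^1$. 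Via the blow-down map, $\hat M$ minus a neighbourhood of $E$ is canonically identified with $M$ minus a small ball $\iota(B^4(\delta))$, $\delta<c$.

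First I would put the symplectic form on $\hat M$. Recall the standard weight-$c$ local model: the total space of $\mathcal{O}(-1)$ carries a symplectic form, obtained from $(\C^2,\om_{\mathrm{std}})$ by a symplectic cut, equivalently by reduction of a disc sub-bundle, for which the zero section has area $\pi c^2$ and for which the complement of a small neighbourhood of the zero section is symplectomorphic, via blow-down, to the spherical shell $B^4(c+\epsilon)\setminus\overline{B^4(\delta)}$ with its standard form. Transport this model into a neighbourhood of $E\subset\hat M$ and set $\om_c$ equal to this transported form near $E$ and equal to $\om$ on $\hat M\setminus\iota(B^4(c+\epsilon))$; on the overlapping shell the two prescriptions agree, both being the blow-down pull-back of $\om_{\mathrm{std}}$, so they glue to a symplectic form $\om_c$ on $\hat M$ with $\langle[\om_c],[E]\rangle=\pi c^2$ and $[\om_c]=[\om]+\pi c^2\,PD[E]$.

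Now compare with the symplectic blow-up. By the construction of \cite{MS}, $\tilde M$ is obtained from $(M\setminus\iota(B^4(\delta)),\om)$ by gluing in exactly this same weight-$c$ model near $E$, with the same normalisation. Hence there is a diffeomorphism $\psi:\hat M\to\tilde M$ which, under the two blow-down identifications, is the identity outside a neighbourhood of $E$ — equivalently, outside $\iota(B^4(c+\epsilon))$ — and is the model identification near $E$; moreover $\psi$ can be arranged so that $\psi^*\tilde\om$ coincides with $\om_c$ near $E$ and near $\partial\iota(B^4(c+\epsilon))$. The forms $\psi^*\tilde\om$ and $\om_c$ are cohomologous, both representing $[\om]+\pi c^2\,PD[E]$, and their difference is supported in a compact annular region inside $\iota(B^4(c+\epsilon))$ that avoids $E$. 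A Moser argument relative to a neighbourhood of the boundary of this region produces an isotopy, supported in an arbitrarily small neighbourhood of $\iota(B^4(c+\epsilon))$, carrying $\om_c$ to $\psi^*\tilde\om$; composing $\psi$ with its time-one map gives the desired symplectomorphism $\phi:(\hat M,\om_c)\to(\tilde M,\tilde\om)$, equal to the identity outside a small neighbourhood of $\iota(B^4(c))$.

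The main difficulty is concentrated in this last step: making the standard weight-$c$ model precise enough — both its area normalisation and the identification of the complement of $E$ with a standard shell — so that the diffeomorphism $\psi$ and then the relative Moser isotopy can be taken with supports confined to a small neighbourhood of the ball. This is essentially the uniqueness statement for symplectic blow-ups as in \cite{MS}, upgraded to keep track of supports; the only cohomological obstruction to the relative Moser step lies in the one-dimensional space spanned by $PD[E]$, and it vanishes precisely because the two sides have been normalised with the same parameter $c$.
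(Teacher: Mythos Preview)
The paper does not prove this statement at all: its entire proof is the single sentence ``This is the content of Lemma~7.1.21~(ii) in \cite{MS}.'' What you have written is a sketch of precisely that McDuff--Salamon argument --- build the weight-$c$ form on the complex blow-up by gluing the standard $\mathcal{O}(-1)$ model to $\om$ across a shell, observe the symplectic blow-up is built from the same pieces, and close the gap with a compactly supported Moser isotopy --- so your approach and the cited one coincide.

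Two small points worth tightening if you keep the written-out version. First, the Moser step needs the convex path $(1-t)\om_c + t\,\psi^*\tilde\om$ to stay nondegenerate on the annular region; this is standard once you have arranged both forms to be the blow-down pull-back of $\om_{\mathrm{std}}$ on overlapping shells (so they are $C^0$-close on the remaining thin annulus), but it deserves a word. Second, your opening assumption that $\iota$ extends to $B^4(c+\epsilon)$ is exactly what the paper's Definition of a compatible ball embedding imposes, and is also implicit in the McDuff--Salamon formulation; your ``recovered in the limit'' remark for the non-extendable case is not needed for the applications in this paper and is in any case not quite right as stated.
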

\begin{proof}
    This is the content of Lemma 7.1.21 (ii) in \cite{MS}.
\end{proof}

Given a chain of log Calabi-Yau divisors $D=\bigcup_j D_j \subset M$ and a non-D-compatible ball embedding into $M$ intersecting only one of the divisors $D_i$, we now use the above lemma to produce a divisor $D'_i$ of the symplectic blow up $\tilde{M}$ such that $ \bigcup_{j\neq i} D_j \cup D'_i$ is a log Calabi-Yau divisor for $\tilde{M}$.

\begin{lemma}\label{Lemma:CompleteLogCan}
Let $M$ be a 4-manifold with an almost toric fibration $\mu: M \rightarrow B$, with Log Calabi-Yau divisor $\mu^{-1}(\partial B) = \bigcup_{i=1}^k D_i$ and a symplectic ball embedding $\iota: B^4(c) \hookrightarrow M$ with capacity $c$ such that it intersects exactly one of the components of $D_i$. We assume without loss of generality that $\iota(0) \in D_i$. Let $(\tilde{M},\tilde{\om})$ denote the symplectic blow up with respect to the ball embedding. Then there exists a symplectic divisor $D'_i \subset \tilde{M}$ such that $\bigcup_{j \neq i} D_j \cup D'_i$ is an orthogonal log Calabi-Yau divisor for $(\tilde{M},\tilde{\om})$.  
\end{lemma}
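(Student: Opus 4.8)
\emph{Overall strategy.} The plan is to reduce to the $D$-compatible situation. I would first replace $D_i$ by a \emph{homologous} symplectic submanifold $D_i^\sharp\subset M$ which agrees with $D_i$ near the intersection points $D_i\cap D_j$ and is compatible with $\iota$ in the sense of Definition~\ref{Defn:Compatibleball}, and then realize $D'_i$ as the proper transform of $D_i^\sharp$ inside the complex model of the blow-up furnished by Theorem~\ref{Thm:Complex&Symplecticequivalence}. The class that $D'_i$ is forced to lie in is $[D_i]-[\tilde E]$, so $\langle[\tilde{\om}],[D'_i]\rangle=\langle[\om],[D_i]\rangle-\pi c^2$ must be positive; accordingly I assume $\langle[\om],[D_i]\rangle>\pi c^2$, which is necessary for the conclusion and holds in all situations to which the lemma is applied.

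\emph{Construction of $D_i^\sharp$.} Fix $\epsilon>0$ so small that $\iota$ extends to $B^4(c+\epsilon)$, that $\iota(\overline{B^4(c+\epsilon)})$ avoids the $D_j$ with $j\neq i$, and that $\iota(B^4(c+\epsilon))$ contains the neighbourhood on which the symplectomorphism of Theorem~\ref{Thm:Complex&Symplecticequivalence} below differs from the identity. After an isotopy of $D_i$ supported away from the intersection points — which changes neither $[D_i]$ nor the germ of $D_i$ near those points — assume $D_i$ meets $\iota(\partial B^4(c+\epsilon))$ transversally and $D_i\cap\iota(\overline{B^4(c+\epsilon)})$ is a single embedded symplectic disk through $\iota(0)$. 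Choose a ball $U\subset M$ containing $\iota(\overline{B^4(c+\epsilon)})$, disjoint from the $D_j$ ($j\neq i$) and from all intersection points, meeting $D_i$ in a single disk $S:=D_i\cap U$, and large enough that $\int_S\om>\pi(c+\epsilon)^2$; this is possible because $\int_{D_i\cap U}\om\to\langle[\om],[D_i]\rangle>\pi c^2$ as $U$ exhausts $M$ away from the intersection points. Now replace $S$ by a symplectic disk $S'\subset U$ with $\partial S'=\partial S$, the same boundary germ as $S$ along $\partial S$, and $S'\cap\iota(B^4(c+\epsilon))=\iota(\{z_2=0\})$: build $S'$ as the union of the $z_1$-disk of $\iota(B^4(c+\epsilon))$ with an embedded symplectic annulus in the shell $U\setminus\iota(B^4(c+\epsilon))$ joining the corresponding Hopf fibre to $\partial S$ — the two circles being unknotted and cobordant there, the only analytic requirement being on the annulus' area, which by Stokes' theorem is forced to be $\int_S\om-\pi(c+\epsilon)^2>0$. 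Set $D_i^\sharp:=(D_i\setminus S)\cup S'$. Since $S'\cup(-S)$ bounds in the ball $U$ it is nullhomologous, so $[D_i^\sharp]=[D_i]$; moreover $D_i^\sharp$ is an embedded symplectic submanifold, it agrees with $D_i$ near every intersection point, and $\iota$ is $D_i^\sharp$-compatible. Hence $\bigcup_{j\neq i}D_j\cup D_i^\sharp$ is again an orthogonal symplectic log Calabi-Yau divisor for $(M,\om)$.

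\emph{Proper transform and transport.} Pick an $\om$-compatible $J$ on $M$ with $J=\iota_*J_0$ on $\iota(B^4(c+\epsilon))$ — so that $D_i^\sharp$ is $J$-holomorphic there, with complex tangent line at $\iota(0)$ — extended over $M$ so that $D_i^\sharp$ and each $D_j$ ($j\neq i$) are $J$-holomorphic (no obstruction, the $D_j$ lying where $J$ is still free). Let $\pi:(\hat M,J_{\hat M})\to(M,J)$ be the complex blow-up at $\iota(0)$ with $E_0:=\pi^{-1}(\iota(0))$. Theorem~\ref{Thm:Complex&Symplecticequivalence} gives a symplectic form $\om_c$ on $\hat M$ with $\langle[\om_c],[E_0]\rangle=\pi c^2$, equal to $\pi^*\om$ away from $E_0$ and compatible with $J_{\hat M}$ near $E_0$, and a symplectomorphism $\varphi:(\hat M,\om_c)\to(\tilde M,\tilde{\om})$ that is the identity outside $\iota(B^4(c+\epsilon))$. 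The proper transform $\widehat{D_i^\sharp}:=\overline{\pi^{-1}(D_i^\sharp\setminus\iota(0))}$ is a smoothly embedded submanifold meeting $E_0$ transversally once (as $D_i^\sharp$ is $J$-holomorphic throughout $\iota(B^4(c+\epsilon))$), it is $J_{\hat M}$-holomorphic on $\pi^{-1}(\iota(B^4(c+\epsilon)))$ and equals $\pi^{-1}(D_i^\sharp)$ — where $\om_c=\pi^*\om$ — elsewhere, so it is a closed $\om_c$-symplectic submanifold of class $[D_i^\sharp]-[E_0]=[D_i]-[E_0]$. Put $D'_i:=\varphi(\widehat{D_i^\sharp})\subset\tilde M$. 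Then $D'_i$ is a symplectic submanifold of class $[D_i]-[\tilde E]$ (using $\varphi_*[E_0]=[\tilde E]$ and that $\varphi$ is the identity on the complementary summand of $H_2$); since both the support of $\varphi-\mathrm{id}$ and the region altered in constructing $D_i^\sharp$ miss the $D_j$ ($j\neq i$) and all intersection points, near each such point $D'_i$ coincides with $D_i$, so $D'_i$ meets each adjacent $D_j$ positively transversally, $\om$-orthogonally and once, and is disjoint from the rest. Its total class $\sum_{j\neq i}[D_j]+[D_i]-[\tilde E]=\mathrm{PD}(c_1(TM))-[\tilde E]=\mathrm{PD}(c_1(T\tilde M))$, so $\bigcup_{j\neq i}D_j\cup D'_i$ is an orthogonal symplectic log Calabi-Yau divisor for $(\tilde M,\tilde{\om})$.

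\emph{Main obstacle.} I expect the genuine difficulty to be concentrated in the construction of $D_i^\sharp$, the only step where the failure of $\iota$ to be $D_i$-compatible is actually absorbed: one must ensure the modified component remains an embedded symplectic submanifold homologous to $D_i$ with its intersections with the other $D_j$ untouched, and must check that the Stokes-forced area of the interpolating annulus is positive — equivalently that $\langle[\om],[D_i]\rangle>\pi c^2$. A secondary point requiring care is that $\om_c$ (hence the proper transform) be genuinely $\om_c$-symplectic across the whole region where $\om_c\neq\pi^*\om$; this is where it matters that $D_i^\sharp$ was made $J$-holomorphic on all of $\iota(B^4(c+\epsilon))$, not merely near $\iota(0)$, and that $\om_c$ can be taken compatible with (or at least taming) $J_{\hat M}$ there — a fact one reads off from the proof of Theorem~\ref{Thm:Complex&Symplecticequivalence}.
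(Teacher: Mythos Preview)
Your route is substantially more elaborate than the paper's. The paper does not modify $D_i$: since $\iota(0)\in D_i$, it directly forms the proper transform $\hat D_i$ of $D_i$ in the complex blow-up $(\hat M,\om_c)$, sets $D'_i$ equal to the image of $\hat D_i$ under the symplectomorphism $\phi$ of Theorem~\ref{Thm:Complex&Symplecticequivalence}, and notes that near each $D_j$ ($j\neq i$) the map $\phi$ is the identity and $\hat D_i$ agrees with $D_i$, so orthogonality and the log Calabi--Yau condition are inherited. Your ``Proper transform and transport'' paragraph is precisely this argument, run with $D_i^\sharp$ in place of $D_i$; everything else in your proposal is devoted to manufacturing $D_i^\sharp$.

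That manufacturing step contains a genuine gap. You claim an embedded symplectic annulus exists in the shell $U\setminus\iota(B^4(c+\epsilon))$ interpolating between $\partial S$ and a Hopf circle, with matching boundary germs, on the grounds that the two circles are unknotted and smoothly cobordant and the Stokes area is positive. Those conditions are far from sufficient: producing an embedded symplectic surface with prescribed boundary in a four-dimensional symplectic cobordism is a delicate problem, sensitive to the transverse-knot types of the boundary circles and the contact geometry of the boundary spheres, and no off-the-shelf existence result of the kind you invoke is available. The preliminary isotopy making $D_i\cap\iota(\overline{B^4(c+\epsilon)})$ a single embedded disk through $\iota(0)$ is likewise asserted without justification and is not automatic for a ball of macroscopic capacity. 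In effect you have traded the local concern you correctly flag --- whether $\hat D_i$ is $\om_c$-symplectic across the region where $\om_c\neq\pi^*\om$, a point on which the paper is admittedly terse --- for a harder global surface-construction problem that you leave open. If you want to shore up that local concern, it is lighter to argue via a small perturbation of $D_i$ (or a reparametrisation of $\iota$) near the blow-up centre than to excise and rebuild a macroscopic piece of the divisor.
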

\begin{proof}
Push forward the standard complex structure on $\C^2$ to the image of $\iota$ and extend this complex structure to an arbitrary $\om$-compatible almost complex structure. Let $\hat{M}$ denote the complex blow up centered on $\iota(0)$ with respect to this complex structure. By Theorem~\ref{Thm:Complex&Symplecticequivalence},there exists a symplectomorphism $\phi: (\hat{M},\om_c) \rightarrow (\tilde{M},\tilde{\om})$ between the complex blow up equipped with an appropriate symplectic form $\om_c$ and the symplectic blow up  $(\tilde{M},\tilde{\om})$ such that the symplectomorphism is the identity outside a small neighbourhood of $\iota(B^4(c))$. Let $\hat{D}_i$ denote the complex proper transform of $D_i$. Then $D':=\phi^{-1}(\hat{D}_i)$ is the required divisor. As $\phi$ is identity near $\bigcup_{j \neq i} D_j$ and $\hat{D}_i$ coincides with $D_i$ near intersection points, we have that $\bigcup_{j \neq i} D_j \cup D'_i$ is an orthogonal log Calabi-Yau divisor for $(\tilde{M},\tilde{\om})$.
\end{proof}

\begin{proof}[Proof of Proposition \ref{Prop:NonCompatibleCase}]
   There is a symplectomorphism $\phi:X\to X$ so that $\phi\circ\iota(0)$ belongs to $D_i$. Namely, we can pick a path $\gamma$ from $\iota(0)$ to $D_i$ such that $\gamma$ does not meet any of the other divisors. We can then construct an appropriate Hamiltonian supported in an arbitrarily small neighborhood of $\gamma$. 
   By Lemma \ref{Lemma:CompleteLogCan} we have a log Calabi-Yau divisor $\bigcup_{j \neq i} D_j \cup D'_i$  for the symplectic blow up of  $\phi\circ\iota$. Since $\phi$ is the identity near $\bigcup_{j \neq i} D_j$, we have that $\bigcup_{j \neq i} D_j \cup D'_i$ is a log Calabi-Yau divisor for $(\tilde{X},\tilde{\omega})$. So by defining an algebraic map similar to the proof of Theorem~\ref{globalCase} and applying the Symplectic Torelli Theorem (Theorem~\ref{Theorem:Isotopyofdivisors}) we have a symplectomorphism of symplectic log Calabi Yau pairs where the components $D_j$ for $j\neq i$ map to the corresponding component $D_j$.  
\end{proof}

\subsection{Adjustment near the boundary}

\begin{prop}\label{lmBULocalCase}Using the notation of Propositions \ref{globalCase} and \ref{Prop:NonCompatibleCase}, let $E=\sum_{j\neq i}D_j$ in $X'$ and $\tilde{X}$ respectively the sum of the non-blown up boundary divisors. Then there is a symplectomorphism of symplectic pairs $(X',E)\to (\tilde{X}, E)$ which is the identity in a neighborhood of $E$. 
\end{prop}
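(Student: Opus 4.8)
The plan is to upgrade the global symplectomorphism $\phi$ from Propositions~\ref{globalCase} and \ref{Prop:NonCompatibleCase} — which matches the classes $[D_j]$ for $j\neq i$ but need not be the identity near $E=\sum_{j\neq i}D_j$ — into one that is literally the identity on a neighborhood of $E$. The mechanism is a symplectic isotopy extension / uniqueness-of-tubular-neighborhood argument for the divisor $E$, performed equivariantly with respect to the torus actions that exist near $E$ in both $X'$ and $\tilde X$.

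First I would observe that since the ball embedding $\iota$ is contained in $\mu^{-1}(R^o)$ and $T(c)$ shares only the edge $e\subset\partial_i B$ with the boundary, neither surgery touches a neighborhood of $E$: in both $X'$ and $\tilde X$ the divisor $E$ has a fixed normal neighborhood $\mathcal N(E)$ carrying a standard toric (Delzant-corner) model coming from the original ATF $\mu$, and $\mathcal N(E)$ is canonically identified in the two manifolds. The symplectomorphism $\phi:(X',E)\to(\tilde X,E)$ from the previous propositions sends $E$ to $E$ componentwise and preserves $\omega$, so $\phi$ restricts to a symplectomorphism of a (possibly smaller) neighborhood of $E$ in $X'$ onto a neighborhood of $E$ in $\tilde X$, both fibered over the same corner region of $B$. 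The second step is a relative Moser / Weinstein argument: two symplectic embeddings of a neighborhood of the orthogonal divisor configuration $E$ that agree on $E$ itself (as they do here, up to composing $\phi$ with its own restriction-to-$E$ inverse, which lies in the image of the torus action and hence extends) are isotopic through symplectic embeddings rel $E$. Concretely, use the symplectic neighborhood theorem for the normal-crossing configuration $E$ — the components are $\omega$-orthogonal symplectic surfaces meeting transversally, so a neighborhood of $E$ is determined up to symplectomorphism by the symplectic areas and the symplectic normal bundles, all of which are preserved by $\phi$ — to produce a symplectomorphism $\psi$ of a neighborhood of $E$ in $\tilde X$ with $\psi|_E=\mathrm{id}$ and $\psi\circ\phi = \mathrm{id}$ near $E$.

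The third step is to globalize: $\psi$ is defined only near $E$, but by the standard trick one cuts off the generating isotopy using a bump function supported in $\mathcal N(E)$ to obtain a compactly supported symplectomorphism $\Psi$ of $\tilde X$, equal to $\psi$ on a smaller neighborhood $\mathcal N'(E)$ and to the identity outside $\mathcal N(E)$; since $\mathcal N(E)$ is disjoint from $T(c)$ and $\iota(B^4(c))$, $\Psi$ is supported in $\mu^{-1}(R)$. Then $\Psi\circ\phi:(X',E)\to(\tilde X,E)$ is the desired symplectomorphism, equal to the identity on $\mathcal N'(E)$. One should check it still preserves the classes $[D_j]$, which is automatic since $\Psi$ is isotopic to the identity.

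The main obstacle I expect is the passage from "agrees on $E$" to "agrees near $E$" cleanly: the symplectic neighborhood theorem for a normal-crossings configuration of several orthogonal symplectic surfaces is slightly more delicate than for a single symplectic submanifold, because one must simultaneously normalize the collar of each component and the corner structure where components meet, and one must arrange the intermediate identifications to respect the torus-invariant model so that the cutoff step does not destroy the symplectic condition at the corners. The $\omega$-orthogonality hypothesis (built into the definition of the pairs and preserved by both blow-ups, as noted in the proof of Proposition~\ref{globalCase}) is exactly what makes the local models split as products and lets this go through; I would phrase the local normalization as a finite induction over the strata of $E$, smallest (the intersection points / torus fixed points) first, using a relative Moser argument at each stage with the previous stage held fixed.
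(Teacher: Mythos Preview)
Your overall plan---start from the global $\phi$ of Propositions~\ref{globalCase}/\ref{Prop:NonCompatibleCase} and correct it near $E$ by an inductive normalization over strata---is the same strategy the paper follows, and your final paragraph (normalize at the $0$-strata first, then along the components, holding previous stages fixed) is exactly the shape of the paper's Steps~1--4. Two points in the body of your proposal, however, are genuine gaps rather than missing details.

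First, the parenthetical ``composing $\phi$ with its own restriction-to-$E$ inverse, which lies in the image of the torus action and hence extends'' is not correct: $\phi|_{D_j}$ is an arbitrary area-preserving diffeomorphism of a $2$-sphere fixing the two marked points, and there is no reason it should be a circle rotation. The paper instead uses connectedness of $\Symp(S^2)$ to write $\phi^{-1}|_{D_j}$ as the time-$1$ map of a Hamiltonian $H^t_j$ on $D_j$, then extends $H^t_j$ to a tubular neighborhood via a bump function. A subtlety you do not mention, and which the paper uses essentially, is that $E$ is a \emph{chain} rather than a cycle of spheres: this is what allows the Hamiltonians $H^t_j$ to be normalized to vanish at the left endpoint $D_{j,j-1}$ consistently along the chain, so that the extensions on overlapping tubes are compatible.

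Second, the Weinstein/Moser step as you phrase it (``a neighborhood of $E$ is determined by the areas and the symplectic normal bundles, all of which are preserved by $\phi$'') conflates uniqueness of the germ with triviality of the self-map. Even after arranging $\phi|_E=\id$, the differential $d\phi$ along $E$ acts on the symplectic normal bundle by some unitary gauge transformation, and the relative Moser argument only applies once this gauge is the identity. The paper isolates this as a separate step (its Step~3), invoking a lemma (Proposition~3.9 of \cite{Ch-Gro}) that produces a Hamiltonian isotopy fixing $D_j$ pointwise and matching $d\phi$ along $D_j$; only afterwards does the ``identity on $S$ and on $TM|_S$ $\Rightarrow$ Hamiltonian isotopic to $\id$ near $S$'' lemma (Proposition~3.8 of \cite{Ch-Gro}) apply, and only then is the cutoff of the generating Hamiltonian legitimate. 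Your proposal jumps directly to the cutoff without securing $d\phi|_{TM|_E}=\id$, which is where it would fail as written.
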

Before proceeding with the proof, note that both symplectic manifolds $X'$ and $\tilde{X}$ are obtained by surgeries from the same toric variety $X$, with the surgeries supported away from the non-blow-up divisors $E$. Thus there is a notion of an identity map in a neighborhood of $E$.

To prove Proposition \ref{lmBULocalCase}, we need the following two propositions from \cite{Ch-Gro}:

\begin{lemma}[Proposition 3.8 in \cite{Ch-Gro}]\label{Prop:IsotopyToIdentity}
Let $(M,\om)$ be a symplectic manifold and let $S\subset M$ be a symplectic submanifold. Let $\Phi$ be a symplectomorphism of an open neighbourhood of $S$ onto an open neighbourhood of $S$. Suppose $\Phi|_S = \id$ and $d\Phi$ restricted to $TM|_S$ is the identity as well. 
Then $\Phi$ is Hamiltonian isotopic to the identity on a possibly smaller neighbourhood $U'$ of $S$. 
Furthermore, if $\Phi$ is the identity on an open neighbourhood $V$ in $M$ of a closed subset $A\subset S$ then the Hamiltonian can be taken to be $0$ on the union of $S$ with a possibly smaller neighbourhood of $A$. 
\end{lemma}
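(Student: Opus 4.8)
The plan is to study $\Phi$ through the graph trick: pass to the product $(M\times M,\ \mathrm{pr}_1^*\om-\mathrm{pr}_2^*\om)$, where the diagonal $\Delta$ and the graph $\Gamma_\Phi=\{(p,\Phi(p))\}$ are both Lagrangian (the latter precisely because $\Phi$ is a symplectomorphism). First I would apply Weinstein's Lagrangian neighbourhood theorem to the diagonal, identifying a neighbourhood of $\Delta$ in $M\times M$ with a neighbourhood of the zero section of $T^*\Delta\cong T^*M$ (using the first projection $\Delta\cong M$) and carrying $\Delta$ to the zero section. The two hypotheses enter here: $\Phi|_S=\id$ together with $d\Phi|_{TM|_S}=\id$ say that $\Gamma_\Phi$ is tangent to $\Delta$ to first order along $\{(s,s):s\in S\}$, so by continuity of $\Phi$ and $d\Phi$, on a sufficiently small tubular neighbourhood $U$ of $S$ the graph $\Gamma_\Phi$ lies inside the Weinstein neighbourhood and projects diffeomorphically to $U$. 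Thus $\Gamma_\Phi$ is the graph of a $1$-form $\sigma$ on $U$, which is closed because $\Gamma_\Phi$ is Lagrangian. Since $\Gamma_\Phi$ contains the zero section over $S$ (as $\Phi$ fixes $S$ pointwise), $\sigma$ vanishes as a covector at every point of $S$; in particular $i_S^*\sigma=0$.

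Next I would show $\sigma$ is exact. The tubular neighbourhood $U$ deformation retracts onto $S$, so the restriction $H^1(U;\R)\to H^1(S;\R)$ is an isomorphism; as $[\sigma]$ maps to $[i_S^*\sigma]=0$, we get $[\sigma]=0$ and hence $\sigma=dH$ for some $H$ on a (possibly smaller) $U$. Because $\sigma$ vanishes as a full covector along $S$, we have $dH=0$ on $S$, so $H$ is locally constant on $S$; after subtracting the corresponding locally constant value on each component of $U$, I may assume $H\equiv 0$ on $S$.

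Now I would build the isotopy. On $T^*M$ the pulled-back Hamiltonian $\pi^*H$ generates the fibrewise translation $(x,p)\mapsto(x,p+t\,dH_x)$, which carries the zero section to $\mathrm{graph}(t\,dH)$. Transferring these Lagrangian graphs through the Weinstein identification yields, for each $t$, a Lagrangian $C^1$-close to $\Delta$, hence the graph of a symplectomorphism $\Phi_t$ of a possibly smaller neighbourhood $U'$ of $S$, with $\Phi_0=\id$ and $\Phi_1=\Phi$. This family is symplectic, and it is Hamiltonian because the defining $1$-forms $t\,dH$ are exact (the flux vanishes). Moreover $\sigma$, and hence $t\,dH$, vanishes on $S$, so each $\Phi_t$ fixes $S$ pointwise; differentiating in $t$ shows the generating vector field $X_t$ vanishes on $S$, whence $dG_t=\iota_{X_t}\om=0$ there for any generating Hamiltonian $G_t$. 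Normalizing $G_t$ to vanish on $S$ gives the first assertion.

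For the relative statement, suppose $\Phi=\id$ on an open set $V\supseteq A$. Then $\Gamma_\Phi=\Delta$ over $V$, so $\sigma=0$ on $V\cap U$ and consequently $\Phi_t=\id$ and $X_t=0$ there for all $t$; thus $G_t$ is locally constant on $V\cap U$. Each connected component of $V\cap U$ that meets $A\subset S$ also meets $S$, where $G_t$ was normalized to $0$, so $G_t\equiv 0$ on the union $W$ of those components, an open neighbourhood of $A$. Hence $G_t$ vanishes on $S\cup W$, as required. I expect the main obstacle to be the two upgrades that the hypotheses are there to guarantee: passing from a merely symplectic isotopy to a Hamiltonian one (which reduces to exactness of $\sigma$, i.e. to $i_S^*\sigma=0$ and $H^1(U)\cong H^1(S)$) and pinning the generating Hamiltonian to vanish exactly on $S$ and near $A$ (which relies on the full covector vanishing of $\sigma$ along $S$, a consequence of $\Phi|_S=\id$, feeding $X_t|_S=0$). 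The remaining points — smoothness of the family down to $t=0$ and that $\Gamma_\Phi$ is genuinely a section over $U$ — are routine given $d\Phi|_{TM|_S}=\id$.
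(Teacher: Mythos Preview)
The paper does not supply its own proof of this lemma: it is quoted from \cite{Ch-Gro} (Proposition~3.8 there) and used as a black box in the proof of Proposition~\ref{lmBULocalCase}. So there is no in-paper argument to compare against.

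On its own merits your argument is the standard one and is correct. The graph trick combined with the Weinstein Lagrangian neighbourhood theorem is exactly how results of this type are proved (cf.\ Weinstein's original work and the treatment in McDuff--Salamon). You identify the roles of the hypotheses correctly: $\Phi|_S=\id$ forces the $1$-form $\sigma$ to vanish as a full covector along $S$, which both (i) kills $[\sigma]\in H^1(U)\cong H^1(S)$ and hence yields exactness, and (ii) gives $X_t|_S=0$, allowing the normalization $G_t|_S\equiv 0$; while $d\Phi|_{TM|_S}=\id$ is what guarantees $\Gamma_\Phi$ is genuinely graphical over a neighbourhood of $S$. One small remark: your appeal to ``the flux vanishes'' to deduce that $\{\Phi_t\}$ is Hamiltonian is a bit implicit on an open manifold. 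A cleaner route, already present in your write-up, is to argue directly that $X_t|_S=0$ implies $\iota_{X_t}\omega$ vanishes as a covector along $S$, hence is exact on $U'$ by the same retraction argument you used for $\sigma$; this bypasses any flux machinery. The relative clause is handled correctly: components of $V\cap U'$ meeting $A$ meet $S$, so the normalized $G_t$ vanishes there.
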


\begin{lemma}[Proposition 3.9 in \cite{Ch-Gro}]\label{Lemma:MakingNormalIdentity}
Let $(M,\om)$ be a symplectic $4$-manifold and $S\subset M$ a  symplectic sphere. Let $U$ be an open neighbourhood of $S$ and $\phi:U\rightarrow M$ a symplectic embedding whose restriction $\phi|_S$ is the identity. Then there exists a Hamiltonian isotopy $\beta_t:M \rightarrow M$ such that $\beta_t|_S = id$ and on  $d\beta_1 = d\phi$ on $S$. Moreover, if there is a closed subset $A\subset S$ for which $\phi$ is the identity on an open neighborhood of $A$ in $M$, then $\beta$ can be taken to be the identity on some open neighborhood of $A$.
\end{lemma}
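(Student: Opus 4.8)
The plan is to begin with the global symplectomorphism of pairs $\phi\colon (X',E)\to(\tilde X,E)$ supplied by Propositions~\ref{globalCase} and \ref{Prop:NonCompatibleCase}, which satisfies $\phi(D_j)=D_j$ for every $j\neq i$, and to correct it by a sequence of ambient Hamiltonian isotopies, each supported in a small neighbourhood of $E$, until it becomes the identity there. As observed just after the statement, both $X'$ and $\tilde X$ arise from surgeries on the common toric variety $X$ that are supported away from $E$, so a fixed neighbourhood $N(E)$ of the chain is canonically identified in the two manifolds; this is the identification relative to which ``the identity near $E$'' is meant and relative to which we compare $\phi$ with the identity. Since $\phi$ preserves each component $D_j$, it fixes each node $D_j\cap D_{j'}$ of the chain.

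The correction proceeds one component at a time. For a single sphere $D_j$ I would straighten $\phi$ in three sub-steps. First, arrange $\phi|_{D_j}=\id$: the restriction $\phi|_{D_j}$ is a symplectomorphism of the $2$-sphere $D_j$ fixing its nodes, and since the group of symplectomorphisms of $\CP^1$ fixing a prescribed finite set of points is connected, $\phi|_{D_j}$ is Hamiltonian isotopic to the identity rel those points; extending the generating Hamiltonian to a neighbourhood by isotopy extension and composing $\phi$ with the resulting ambient flow yields $\phi|_{D_j}=\id$. Second, with $\phi|_{D_j}=\id$ in hand, apply Lemma~\ref{Lemma:MakingNormalIdentity} to $D_j$ to produce an ambient Hamiltonian isotopy fixing $D_j$ pointwise whose time-one derivative agrees with $d\phi$ along $D_j$; composing with its inverse makes $d\phi=\id$ on $TX|_{D_j}$ while preserving $\phi|_{D_j}=\id$. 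Third, $\phi$ now meets the hypotheses of Lemma~\ref{Prop:IsotopyToIdentity} along $D_j$, so it is Hamiltonian isotopic to the identity on a smaller neighbourhood of $D_j$; composing with the inverse of that isotopy makes $\phi$ equal to the identity on a neighbourhood of $D_j$.

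To globalise over the whole chain I would process the spheres in order along $E$ and, at each sub-step, invoke the relative refinements of the ingredients: the ``furthermore'' clauses of Lemmas~\ref{Prop:IsotopyToIdentity} and \ref{Lemma:MakingNormalIdentity}, and a rel version of the connectedness/isotopy-extension argument. Concretely, once $\phi$ has been made the identity near the spheres already treated, and in particular near the node $p$ that $D_j$ shares with a treated neighbour, each of the three sub-steps for $D_j$ is carried out rel a neighbourhood of $p$, taking the isotopies to vanish there with $A$ a neighbourhood of the already-handled nodes. Since the supports of the corrections for $D_j$ can be confined to an arbitrarily thin tube about $D_j$, they meet the previously treated spheres only near $p$, where they are trivial; hence treating $D_j$ does not disturb the earlier work. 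After all components are processed, $\phi$ is the identity on a neighbourhood of $E=\bigcup_{j\neq i}D_j$, and as every correction was an ambient symplectomorphism supported near $E$, the resulting map is a globally defined symplectomorphism of pairs $(X',E)\to(\tilde X,E)$ which is the identity near $E$.

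The main obstacle is exactly the interaction at the nodes: the three sub-steps are clean for an isolated sphere, but the components of $E$ overlap there, so one must ensure that straightening $\phi$ along $D_j$ does not undo the straightening already achieved along an adjacent component. This is what the relative versions of Lemmas~\ref{Prop:IsotopyToIdentity} and \ref{Lemma:MakingNormalIdentity} are built for; the price is that $\phi$ must be made the genuine identity, not merely the identity to first order, on a full neighbourhood of each node before moving on, so that the shared node furnishes the closed set $A$ on which subsequent isotopies are required to be trivial. A convenient way to organise this is to first straighten $\phi$ to the identity near the finite node set, using the transverse pair of divisor branches through each node as local models, after which the sphere interiors decouple and are treated independently by the relative argument above.
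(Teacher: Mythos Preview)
Your proposal does not address the stated lemma. Lemma~\ref{Lemma:MakingNormalIdentity} concerns a \emph{single} symplectic sphere $S$ in an arbitrary symplectic $4$-manifold and asks one to construct a Hamiltonian isotopy $\beta_t$ fixing $S$ pointwise with $d\beta_1=d\phi$ along $S$. What you have written is instead a proof sketch of Proposition~\ref{lmBULocalCase}, and in your second sub-step you \emph{invoke} Lemma~\ref{Lemma:MakingNormalIdentity} as a black box rather than prove it. Note also that the paper does not prove this lemma either: it is quoted from \cite{Ch-Gro} without proof, so there is no argument in the present paper to compare against.

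If your target was really Proposition~\ref{lmBULocalCase}, your strategy is close to the paper's but organised differently, and that difference hides a gap. The paper proceeds in layers over the whole chain: first make $\phi|_E=\id$ on every component (Step~1), then use the Alexander trick to make $\phi$ equal to the identity in a full neighbourhood of \emph{every} node (Step~2), and only afterwards apply Lemmas~\ref{Lemma:MakingNormalIdentity} and \ref{Prop:IsotopyToIdentity} component by component, taking $A$ to be a neighbourhood of the nodes so the isotopies are trivial there (Steps~3--4). Your main scheme instead fully straightens one $D_j$ at a time, placing only the \emph{already-treated} node into $A$. The problem is the forward node $q=D_j\cap D_{j+1}$: the ambient isotopies coming from Lemmas~\ref{Lemma:MakingNormalIdentity} and \ref{Prop:IsotopyToIdentity} in your sub-steps~(2)--(3) are unconstrained near $q$ and need not preserve $D_{j+1}$. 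In the annulus between the support of those isotopies and the smaller neighbourhood on which $\phi$ actually becomes the identity, $\phi(D_{j+1})$ can be pushed off $D_{j+1}$, so your sub-step~(1) for $D_{j+1}$ no longer starts from a self-map of that sphere. Your own final paragraph names the correct remedy---first make $\phi$ the identity near all nodes, after which the components decouple---and this is exactly the role of the paper's Step~2; with that reorganisation your argument goes through and essentially matches the paper's.
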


\begin{proof}[Proof of Proposition \ref{lmBULocalCase}]

The idea of the proof is similar to proof of Proposition 3.7 in \cite{Ch-Gro} with the added simplification that $E$ is not a closed cycle of divisors allowing us to carry out the first step below.

By Theorem~\ref{globalCase} we have a symplectomorphism of pairs $\phi:(X',E)\to (\tilde{X},E)$. We will modify $\phi$ to be the identity near $E$ under the identification between $X',\tilde{X}$ in a neighborhood of $E$ which is disjoint of the surgery locus.

\textbf{Step 1:} In this step we will modify $\phi$ in an arbitrarily small neighborhood of $E$ so that the restriction to $E$ is the identity. Consider an irreducible sphere component $D_j$ of $E$. As the group of symplectomorphisms of the sphere is connected and since $\pi_1(S^2)=0$, we have that $\phi^{-1}|_{D_j}$ is generated by a Hamiltonian $H_j^t:D_j \rightarrow \R$. Since $\phi$ fixes the intersection points $D_{ij}$, we can assume the Hamiltonian isotopy fixes the intersection points $D_{j,j-1},D_{j,j+1}$ of $D_j$ with the adjacent divisors. We further normalize the Hamiltonian $H^t_j$ so that it is identically $0$ on $D_{j,j-1}$. This is possible to accomplish for all $j$ since $E$ is a chain (but not a cycle) of divisors.

We can extend each of the Hamiltonian $H^t_j$ to a global Hamiltonian symplectomorphism $\Psi_j$ which is supported in an arbitrarily small open neighborhood $V_j$ of $D_j$ such that the restriction to $\Psi|_{D_{j,j-1}}$ is identity. This is accomplished by identifying a neighbourhood of the 0-section in $N(D_j)$ with a neighborhood  $V_j$ of $D_j$ in $X'$. Since the components of $D$ are symplectically orthogonal to one another, we can arrange the identifications so that the $D_{j-1}\cap V_j$,$D_{j+1}\cap V_{j+1}$ are fibers, and the fibration is locally trivial near the intersection points. We pick a bump function $\rho_j$ which is supported in $V_j$, is identically $1$ near $D_j$ and whose Hamiltonian flow preserves the intersections $D_{j,j-1}\cap V_j,D_{j,j+1}\cap V_i$. Denoting by $\pi_j:V_j\to D_j$ the projection under the above identification of $V_j$ with the normal bundle, the symplectomorphism $\Psi_j$ generated by $\rho_jH_j\circ \pi_j$ satisfies the advertised properties. 

Let $\phi':=\Psi_m\circ\dots \Psi_1\circ\phi$, where $m$ is the number of divisors in $E$. Then
\begin{enumerate}
  \item $\phi'$ preserves $D$
    \item $\phi'$ is the identity on $E$. 
    \item $\phi'$ agrees with $\phi$ outside an arbitrarily small neighborhood of $E$.
\end{enumerate}
as required. 
Abusing notation, we denote the above modified symplectomorphism $\phi'$ by $\phi$. In the next steps, we modify $\phi$ to be the identity in a neighbourhood of  $E$.

\textbf{Step 2:}   We now modify $\phi$ to be identity in a neighbourhood of the $0$-strata i.e in a neighbourhood of each $D_{ij}$, while preserving the strata. This is accomplished by the \emph{Alexander trick.}
We have that $\phi$ fixes the $0$-dimensional strata of the divisor $D$. Let $\omega'=\omega_{\std}$ be the standard symplectic form in a small Darboux ball $B_{ij}$ centered at  $D_{ij}$. By orthogonality we can symplectically identify $B_{ij}$ with the standard ball in $\C^2$ so that $D_i,D_j$ map to the coordinate axes. Restricting $\phi^{-1}$ to a smaller ball $V$, we use the Alexander trick to connect $\phi^{-1}|_{V}$ to the identity via symplectomorphisms which preserve the coordinate axes. More specifically, define 
$$\phi^{-1}_t(v):=\begin{cases}
        \frac1{t}\phi^{-1}(tv),&\quad t\neq 0\\
        d\phi^{-1}(0,0),&\quad t=0.
        \end{cases}
        $$
By the previous step, $d\phi^{-1}(0,0)$ is the identity. We thus get a time dependent symplectic vector field $X_t$ with domain $\phi^{-1}_t(V)$, by $p\mapsto \frac{d}{dt}\phi^{-1}_{t_0+t}(p)|_{t=0}$. By simple connectedness, we can find a time dependent Hamiltonian $H_t:\phi^{-1}_t(V)\to \R$ so that $X_t$ is its Hamiltonian vector field. Let $\epsilon>0$ be such that $B_{\epsilon}\subset \phi^{-1}_t(V)$ for all time $t\in[0,1]$. 

Let $f:\R^2\to[0,1]$ be a function which is identically $0$ for $x+y\geq\epsilon$, identically $1$ close enough to the origin, and such that $\frac {\partial f}{\partial x} $ is identically $0$ in a sufficiently small neighborhood of the $y$ axis, and $\frac {\partial f}{\partial y} $ is identically $0$ in a sufficiently small neighborhood of the $x$ axis. Let $g:\C^2\to\R$ be defined by $g(z_1,z_2):=f(|z_1|^2,|z_2|^2)$. Then the Hamiltonian flow of $gH_t$ is globally defined and preserves the axes. Let $\Psi$ denote global symplectomorphism given by the time 1-flow of $gH_t$. Thus $\tilde{\phi}:= \phi \circ\Psi$ is a modification of $\phi$ that satisfies the following conditions as required. 

\begin{enumerate}\label{Properties1to3}
       \item $\tilde{\phi}(D)=D$ componentwise
       \item $\tilde{\phi}|_E$ is the identity
       \item $\tilde{\phi}$ is the identity near the intersection points $D_{ij}$
       \item $\tilde{\phi}$ agrees with $\phi$ away from an arbitrarily small neighborhood of $E$
   \end{enumerate}

 We continue to abuse notation and denote by $\phi$ the modified map from the previous step.

\textbf{Step 3:} We now modify $\phi$ so that it satisfies all the properties of the previous part, and, in addition, the differential $d\phi|_E$ . Namely,  for each $i$, Proposition \ref{Lemma:MakingNormalIdentity}  produces a Hamiltonian symplectomorphism $\beta_i$  so that $\beta^{-1}_i\circ\phi$ is the identity on $D_i$ and has differential equal to identity. Moreover, the proposition says that $\beta_i$ can be taken to be identity on a small neighborhood of $D_{i,i-1}\cup D_{i,i+1}$ where $\phi$ is already identity by the previous step. Thus, defining $\beta=\beta_m\circ\dots\beta_1$ we have that $\beta\circ\phi$ satisfies the desired property. Moreover, we can make the support of $\beta$ arbitrarily close to $E$ by bumping off the Hamiltonian generator. We still denote the result by $\phi$.

\textbf{Step 4:} Finally, we apply Proposition~\ref{Prop:IsotopyToIdentity} to further modify $\phi$ to be the identity near $E$. Namely, for each $i$ the proposition allows us to modify $\phi$ near each $E_i$ to be identity. Moreover, the generating Hamiltonian can be taken to be $0$ near $D_{i,i-1}$ and away from an arbitrarily small neighborhood of $D_i$. So we inductively apply this modification to $D_{i+1}$ while preserving this property for the union $D_1\cup \dots\cup D_{i}$.

The resulting symplectomorphism $\phi$ satisfies the requirement.
\end{proof}

\section{Proof of Theorem~\ref{tmMain}}

\begin{proof}
    Since $R$ is convex, we can find a Delzant polygon $P\subset R\subset B$ such that $\mu(\iota(B^4(c)))\subset P$. Applying Lemma \ref{globalCase} in the $D$-compatible case and Lemma \ref{Prop:NonCompatibleCase} in the non-$D$-compatible case to the symplectic reduction along the boundary of $P$ (as described in Section~\ref{subsection:sympreduction}) for each of the two blow ups, and modifying the resulting symplectomorphism in accordance with Lemma \ref{lmBULocalCase}, we obtain a symplectomorphism between the almost toric and symplectic blow up of $\mu^{-1}(P^o)$ which is the identity near the boundary. It thus extends to a symplectomorphism as desired. 
\end{proof}

\bibliographystyle{amsalpha}
\bibliography{bibliography}

\providecommand{\bysame}{\leavevmode\hbox to3em{\hrulefill}\thinspace}
\providecommand{\MR}{\relax\ifhmode\unskip\space\fi MR }
% \MRhref is called by the amsart/book/proc definition of \MR.
\providecommand{\MRhref}[2]{%
  \href{http://www.ams.org/mathscinet-getitem?mr=#1}{#2}
}
\providecommand{\href}[2]{#2}
\begin{thebibliography}{LMN25}

\bibitem[CG25]{Ch-Gro}
Pranav {Chakravarthy} and Yoel {Groman}, \emph{{Almost toric fibrations on K3 surfaces}}, arXiv e-prints (2025), arXiv:2502.04304.

\bibitem[Eva23]{Evans_2023}
Jonny Evans, \emph{Lectures on lagrangian torus fibrations}, London Mathematical Society Student Texts, Cambridge University Press, 2023.

\bibitem[LMN25]{Li-Min-Ning}
Tian-Jun Li, Jie Min, and Shengzhen Ning, \emph{Almost toric presentations of symplectic log calabi-yau pairs}, arXiv e-prints (2025), 2303.09964.

\bibitem[LS10]{Leung-Symington}
Naichung~Conan Leung and Margaret Symington, \emph{Almost toric symplectic four-manifolds}, Journal of Symplectic Geometry (2010).

\bibitem[MS17]{MS}
Dusa McDuff and Dietmar Salamon, \emph{Introduction to symplectic topology}, third ed., Oxford Graduate Texts in Mathematics, Oxford University Press, Oxford, 2017. \MR{3674984}

\bibitem[Sym03]{Symington}
Margaret Symington, \emph{Four dimensions from two in symplectic topology}, Topology and geometry of manifolds (editors G Matić, C McCrory), Proc. Sympos. Pure Math. 71, Amer. Math. Soc. (2003) 153 MR2024634 (2003).

\end{thebibliography}
\end{document}